\newtheorem{theorem}{Theorem}[section]
\newtheorem{proposition}[theorem]{Proposition}
\theoremstyle{definition}
\newtheorem{definition}[theorem]{Definition}
\theoremstyle{remark}
\numberwithin{equation}{section}
\begin{document}
\newcommand{\CC}{{\mbox{\rm $\scriptscriptstyle ^\mid$\hspace{-0.40em}C}}}
\newcommand{\OO}{{\mathcal O}}
\title[ON ADMISSIBLE LIMITS OF FUNCTIONS
]{ON ADMISSIBLE LIMITS OF  HOLOMORPHIC FUNCTIONS OF SEVERAL COMPLEX VARIABLES
}%
\author{P.V.Dovbush}%
\address{Institute of Mathematics and Computer Science  of  Academy of Sciences  of Moldova, 5 Academy  Street, MD-2028, Kishinev, Republic
of Moldova}%
\email{peter.dovbush@gmail.com}%

\subjclass{32A40}%
\keywords{Holomorphic function; Lindel\"{o}f principle; Admissible limit.}%

%\date{}%
%\dedicatory{}%
%\commby{}%
% ----------------------------------------------------------------
\begin{abstract}\textbf{Abstract}
 The aim of the present article is to establish the connection between the existence of
the limit along the normal and an admissible limit at a fixed boundary point for holomorphic
functions of several complex variables.
\newline
\textit{\textbf{2000 Mathematics Subject Classification}}: Primary: 32A40.
\newline
\textit{\textbf{Keywords}}: Holomorphic function; Boundary behavior; Admissible limit.
\end{abstract}
\maketitle
% ----------------------------------------------------------------
\section{Introduction}

The connection between the existence
of a radial limit  and an angular limit for a holomorphic function defined on the unit disc is described by Lehto and Virtanen  \cite[Theorem 5]{[LV]} in terms of the
growth of the spherical derivative.

For a precise description we introduce several terms and notation.

Let $U=\{z\in \CC: |z|<1\}$ be a unit disc in $\CC.$ Let $\alpha>1.$ A non-tangential region $\Gamma_\alpha(\xi)$ for $\alpha>1$
 and an angular region $A_\theta(\xi)$ for $\theta \in (0, 2\pi)$  at $\xi \in \partial U$ are defined as follows:
$$\Gamma_\alpha(\xi)= \{z \in U: |1-z\overline{\xi}|<\frac{\alpha}{2}(1-|z|^2)\}, \atop
A_\theta(\xi)=\{z \in U: \pi-\theta<\arg(z-\xi)<\pi+\theta\}.$$
It is to be noted that non-tangential regions and angular regions are
equivalent: For every $\alpha>1$ there is a $\theta\in (0,\frac{\pi}{2})$
 such that  $\Gamma_\alpha(\xi)\subset A_\theta(\xi)$
and
for every $\theta\in (0,\frac{\pi}{2})$
 there is an $\alpha>1$ and a disk $d$ centered at $\xi$ such that
$ A_\theta(\xi)\cap d \subset\Gamma_\alpha(\xi).$

To see this let $d_1$ be the be the unit disk with center $\xi,$  $z\in U$ and $\varphi=\pi-\arg(z-\xi).$ From the law of cosines
$$|z|^2=1-2\cos\varphi|\xi-z|+|\xi-z|^2.$$ Since $|\xi|=1$ we have $|\xi-z|=|1-z\overline{\xi}|$ and
$$\frac{|1-z\overline{\xi}|}{1-|z|^2}=\frac{1}{2\cos \varphi -|1-z\overline{\xi}|}.$$ Thus,
$$\frac{1}{2\cos \varphi}\leq\frac{|1-z\overline{\xi}|}{1-|z|^2} \ \ \ \textrm{    for } z \in U, $$
and
$$\frac{|1-z\overline{\xi}|}{1-|z|^2}\leq \frac{2}{\cos \varphi}\ \ \ \textrm{    for } z \in U\cap d_1.$$

We say that a holomorphic function function $f$ in  $U$ [notation $f \in \OO(U)$]   has the non-tangential limit $L$ at $\xi \in \partial U$ if $f(z) \to L$ as $z \to \xi,$ $z \in \Gamma_\alpha(\xi);$ has radial limit $L$ at $\xi$ if $\lim_{t \to 1}f(t\xi)=L.$

Define the spherical derivative of $f(z)$ to be
$$f^\sharp(z)=\frac{|f'(z)|}{1+|f(z)|^2}. $$
Now we can reformulate Theorem 5 in \cite{[LV]} as follows:
\begin{theorem} \label{thm1}  If  $f \in \OO(U)$   has a radial limit
 at the point $\xi \in \partial U,$ then it has an non-tangential limit at this point if and only
if for any fixed $\alpha > 1$ in the non-tangential region $\Gamma_\alpha(\xi)$
\begin{equation}\label{on}
f^\sharp(z)\leq O\Big(\frac{1}{1-|z|}\Big).
\end{equation}

\end{theorem}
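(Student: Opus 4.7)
The result is a biconditional, and I would handle the two implications separately. For the easy direction ($\Rightarrow$), suppose $f$ has non-tangential limit $L$ at $\xi$; then $|f|$ is bounded by some $M$ on each truncated cone $\Gamma_{\alpha'}(\xi)\cap\{|z-\xi|<\delta\}$. Given $\alpha>1$, I pick $\alpha'>\alpha$ and, from the cone comparison in the introduction, find a constant $c=c(\alpha,\alpha')>0$ such that $D(z,c(1-|z|))\subset\Gamma_{\alpha'}(\xi)$ whenever $z\in\Gamma_\alpha(\xi)$ is close enough to $\xi$. Cauchy's inequality on this disk yields $|f'(z)|\le M/(c(1-|z|))$, and since $f^\sharp\le|f'|$, the estimate \eqref{on} follows.

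For the main direction ($\Leftarrow$) I would use Marty's normality theorem. Fix $\beta>1$, let $z_n\in\Gamma_\beta(\xi)$ with $z_n\to\xi$, and set $r_n=|z_n|$. Normalize $\xi=1$ and choose $\alpha>\beta$ large enough; the hypothesis then provides $C_\alpha$ with $f^\sharp\le C_\alpha/(1-|z|)$ on $\Gamma_\alpha(1)$ near $1$. Introduce the M\"obius automorphism
\begin{equation*}
M_n(\zeta)=\frac{\zeta+r_n}{1+r_n\zeta},\qquad g_n:=f\circ M_n,
\end{equation*}
which fixes $\pm 1$ and sends $0$ to $r_n$. The Schwarz--Pick identity $|M_n'(\zeta)|=(1-|M_n(\zeta)|^2)/(1-|\zeta|^2)$, together with the inclusion $M_n(\zeta)\in\Gamma_\alpha(1)$ valid for $\zeta$ in the open set $\Omega_\alpha:=\{\zeta\in U:|1-\zeta^2|<\alpha(1-|\zeta|^2)\}$ and $n$ large, gives
\begin{equation*}
g_n^\sharp(\zeta)=f^\sharp(M_n(\zeta))\,|M_n'(\zeta)|\le\frac{2C_\alpha}{1-|\zeta|^2}
\end{equation*}
on compact subsets of $\Omega_\alpha$. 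By Marty's theorem $\{g_n\}$ is normal on $\Omega_\alpha$, so after passing to a subsequence $g_n\to g$ locally uniformly in the spherical metric.

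To identify $g$, I note that for each $t\in(-1,1)\subset\Omega_\alpha$ the point $M_n(t)$ is real and tends radially to $1$ as $n\to\infty$, hence $g_n(t)=f(M_n(t))\to L$ by the radial-limit hypothesis. Thus $g\equiv L$ on $(-1,1)$ and, by the identity theorem, on all of $\Omega_\alpha$. Finally, $\zeta_n:=M_n^{-1}(z_n)=(z_n-r_n)/(1-r_n z_n)$ satisfies $|\zeta_n|\le\rho(\beta)<1$ by a short computation from $|1-z_n|<(\beta/2)(1-|z_n|^2)$, so $\zeta_n$ lies in a fixed compact subset of $\Omega_\alpha$ (once $\alpha$ is taken large enough). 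Locally uniform convergence then yields $f(z_n)=g_n(\zeta_n)\to L$, which is the desired non-tangential limit.

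The main difficulty is pinning down the subsequential limit $g$. The design choice that makes this work is taking $M_n$ to preserve the real diameter of $U$, which is exactly where the radial-limit hypothesis can be applied directly; once $g\equiv L$ on that segment, the identity theorem extends the equality to all of $\Omega_\alpha$, and in particular to the compact set containing the $\zeta_n$'s.
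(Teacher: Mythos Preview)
The paper does not supply its own proof of this theorem: it is stated as a reformulation of Theorem~5 of Lehto--Virtanen \cite{[LV]} and then used as a tool. So there is no proof in the paper to compare against; I can only assess your argument on its own merits, and note that it is very much in the spirit of the Montel/normal-families method the paper invokes for its higher-dimensional results.

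Your argument is essentially correct. Two small points deserve tightening. First, in the easy direction you tacitly assume the non-tangential limit $L$ is finite when you bound $|f|$ by $M$; if $L=\infty$, pass to $1/f$ near $\xi$ and use the identity $(1/f)^\sharp=f^\sharp$. Second, in the hard direction you extract a single convergent subsequence of $\{g_n\}$ and then conclude $f(z_n)\to L$ for the full sequence; what your argument actually shows is that every subsequence has a further subsequence along which $f(z_n)\to L$, which of course suffices, but say so explicitly. The geometric claims---that $M_n$ maps compact subsets of $\Omega_\alpha$ into $\Gamma_\alpha(1)$ for $n$ large, that $(-1,1)\subset\Omega_\alpha$, and that the $\zeta_n$ stay in a fixed disk $\overline{D(0,\rho(\beta))}\subset\Omega_\alpha$ for $\alpha$ large---all check out via the computations you outline. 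The identification $g\equiv L$ via the radial segment and the identity theorem is exactly the right mechanism, and it mirrors the role played by $f(\pi(z))$ in the paper's Proposition~\ref{prop} and Theorem~\ref{t1}.
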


Let $B^n=\{z \in \CC^n : |z|<1 \}$ be a unit ball in $\CC^n,$ $n\geq 1.$ Consider the set $D_\alpha(\xi)\subset B^n$ such that
$$|1-(z,\xi)|<\frac{\alpha}{2}(1-|z|^2),$$
where $(z,\xi)=z_1\overline{\xi}_1+z_2\overline{\xi}_n$ and $|z|^2=(z,z).$

 Following Koranyi \cite{[Kor]}, we say that a holomorphic function $f$ in $B^n$ (henceforth, in symbols, $f\in \OO(B^n)$) has admissible limit $L$ at $\xi$ if for every
$\alpha>1$ for every sequence $\{z^j\}$ in $D_\alpha(\xi)$ that converges to $\xi,$ $f(z^j)\to L$ as $j\to \infty.$
(The case $L=\infty$ is not excluded.)

It is clear that the notions of admissible limit and non-tangential limit coincides when $n=1.$

The real tangent space to  $\partial B^n$ at point $\xi$ contains the complex tangent space $T^c_\xi(\partial B^n)$
and $\CC^n$ can be splitting $\CC^n=N_\xi(\partial B^n)\oplus T^c_\xi(\partial B^n). $ The complex line $N_\xi(\partial B^n)$ is called the complex normal
to  $\partial B^n$ at point $\xi.$

 For each $z$ near $\partial B^n$ denote by  $\zeta(z)$  the point on $\partial B^n$ closest to $z.$ Choose the coordinate system $\widetilde{z}_1, \ldots, \widetilde{z}_n$ in $\CC^n$
such that $\zeta(z)=0,$
$T^c_0(\partial B^2)=\{ (0, \widetilde{z}_1, \ldots,  \widetilde{z}_n)\},$ and  $N^c_0=\{(\widetilde{z}_1, 0,\ldots, 0 )\} $
and $\nu_0=(i,0,\ldots,0)$ is the inner normal to $\partial B^n$ at $\zeta(z).$ Set $'\widetilde{z}=(\widetilde{z}_2, \ldots, \widetilde{z}_n).$ Then $B^n=\{\widetilde{z}\in \CC^n : |\widetilde{z}_1-i|^2+|'\widetilde{z}_2|^2<1\}.$

Let polydisc $P_c(z)$ is defined to be the set of all $\widetilde{z} \in \CC^n$ whose coordinates $\widetilde{z}_1,\ldots , \widetilde{z}_n$ in $\CC^n$ satisfy the inequalities
$|\widetilde{z}_1-|z||<c(1-|z|), $ $|\widetilde{z}_\mu|<c\sqrt{1-|z|}, $ $\mu=2,\ldots, n,$ where $c<1/\sqrt{2}.$
For every $\widetilde{z} \in P_c(z)$ we have
$|\widetilde{z}_1-i|^2+|'\widetilde{z}_2|^2\leq 2|\widetilde{z}_1-|z||^2+2||z|-1|^2+|'\widetilde{z}_2|^2<2c^2(1-|z|)^2+[(n-1)c^2+4](1-|z|)<[4+(n+1)c^2](1-|z|)<1$ for all $z$ sufficiently close to $\partial B.$  It follows $P_c(z)\subset B$ for all $z$ sufficiently close to $\partial B.$
The one variable Cauchy's estimate shows that
\begin{equation}\label{Cest}
{\Big|\frac{\partial f}{\partial {\widetilde{z}}_1}(z)\Big|}\leq \frac{\sup_{\{w\in P(z)\}}|f(w)|}{c(1-|z|)}, \atop
{\Big|\frac{\partial f}{\partial {\widetilde{z}}_2}(z)\Big|}\leq \frac{\sup_{\{w\in P(z)\}}|f(w)|}{c\sqrt{1-|z|}}.
\end{equation}

This shows that in several variables the complex normal and complex tangential directions are not equivalent, therefore we  will distinct
the spherical derivative  of $f$ in point $z$ in the complex normal direction
( $={\Big|\frac{\partial f}{\partial {\widetilde{z}}_1}(z)\Big|}/({1+|f(z)|^2})$) and the complex tangential directions
( $={\Big|\frac{\partial f}{\partial {\widetilde{z}}_\mu}(z)\Big|}/({1+|f(z)|^2}), \mu=2,\ldots, n).$

Theorem \ref{thm1} fail to be  true in several variables. Look at the function  $f(z_1, z_2) =\frac{{z^2}_2}{1-z_1}.$  It is  holomorphic and bounded in $B^2,$ since $|f(z)|<(1-|z_1|)^2/(1-|z_1|)\leq 2.$
From (\ref{Cest}) follows that spherical derivative of $f$ in the complex normal  and complex tangential direction
grows no faster than $2c/(1-|z|)$ and  $2c/\sqrt{1-|z|}$ respectively. But this is not sufficient in order that the
existence of a limit along the normal for the function $f$ should imply the existence of an
admissible limit.

Indeed, put $z^j = (1 - 1/j, 1/\sqrt{j})$ for $j = 4, 5, \ldots. $ It is clear that $z^j \to \zeta=(1,0)$ as $j \to \infty.$ A simple calculation shows that $z^j \subset \mathcal  D_\alpha (\zeta)$ if $j$ is sufficiently large. Notice that
$\lim_{r\to 1-} f(r\zeta ) = \lim_{r\to 1-}0 = 0$ and
$f(z^j) = \frac{1/j}{1/j}= 1,$ and so $f$ does not have admissible limit at $\zeta.$

However,  for  $n  =  1$  in  the  estimate  (\ref{on})  (if  a  limit  along  the  normal  exists)  we  can  replace  the  right-hand  side  by  $o(1).$  It  turns  out  that  this  refined  estimate  solves  the  problem  for $ n  >  1.$

It was proved in \cite{[D2]} that if $f\in \OO(B^2),$  the spherical derivatives of $f$ in normal direction increases like $o(1/(1-|z|))$ and spherical derivatives of $f$ in complex tangential direction  increases like $o(1/\sqrt{1-|z|})$ then the
existence of a limit along the normal for the function $f$ should imply the existence of an
admissible limit.

 The main result of the article is the analogous result
for arbitrary domains with $C^2$-smooth boundary in $\CC^n,$ $n > 1.$

Montel \cite{[PM]} used normal  families in  a  simple but  ingenious way to investigate boundary   behavior   of   holomorphic   functions  in angular domains. We apply his method to investigate boundary   behavior   of   holomorphic   functions of several complex variables in admissible domains.

\section{A criterion of existence of admissible limits}
If
$ D$ is a bounded domain
in $\CC^n,$ $n >1,$ with $C^2$-smooth boundary $\partial D,$ then at each
$\xi \in \partial D$  the tangent space $T^c_\xi(\partial D)$ and the unit outward normal vector
$\nu_\xi$ are well-defined. We denote by $ T^c_\xi(\partial D)$ and $N^c_\xi(\partial D)$ the complex
tangent space and the complex normal space, respectively. The complex
tangent space at $\xi$ is defined as the $(n - 1)$ dimensional complex
subspace of $T_\xi(\partial D)$ and given by $ T^c_\xi(\partial D)=\{z \in \CC^n : (z,w)=0, \forall w \in  N^c_\xi(\partial D)\},$ where $(\cdot , \cdot)$ denotes canonical Hermitian product of $\CC^n.$ Let $\delta( z)$ denotes the Euclidean distance of  $z$ from $\partial D $ and $p(z,T_\xi(\partial D))$ is the Euclidean distance from  $z$ to the real tangent plane $T_\xi(\partial D).$

An admissible approach domain $\mathcal
A_{\alpha}(\xi) $ with vertex $\xi \in \partial D$ and aperture $\alpha > 0$ is defined as follows \cite{[sT]}:
\begin{equation}\label{01}
\mathcal
A_{\alpha}(\xi) = \{ \, z \in D \ : |(z-\xi,\nu_\xi)|<(1+\alpha)\delta_\xi(z) ,
|z-\xi|^2<\alpha\delta_\xi(z) \, \},\atop
{\delta_\xi(z)=\min \{\delta(z, \partial D), p(z,T_\xi(\partial D)) \}}.
\end{equation}

It is well known that the introduction of $\delta_\xi(z)$ and the second condition in (\ref{01}), i.e. $|z-\xi|^2<\alpha\delta_\xi(z)$
 only serves to rule out the pathological case when $\partial D$
has flat or concave points. For a ball $B^n=\{z\in \CC^n:|z|<1\}$ the set
$D_\alpha (\xi)$
essentially coincides with (\ref{01}).

\begin{definition}
The function $f,$ defined in a domain $D$ in $\CC^n$ has a limit $L,$ $L \in \overline{\CC},$ along the normal $\nu_\xi$ to $\partial D$  at the point $\xi$ iff $\lim_{t \to 0}f(\xi-t\nu_\xi)=L;$
$f$ has an \emph{admissible limit} $L,$  at $\xi \in \partial D$ iff
$$\lim_{\mathcal A_{\alpha}(\xi) \ni z \to \xi} f(z)= L$$ for every $\alpha >0;$
$f$ is admissible bounded at $\xi$ if $\sup_{z \in \mathcal A_{\alpha}(\xi)}|f(z)|<\infty$ for every $\alpha >0.$
\end{definition}

Let $x_j, y_j $ be the real coordinates of  $z \in \CC^n$ such that $z_j=x_j+iy_j.$ At times it will be convenient to use real variable notations by identifying $z$ with $(x_1, \zeta) \in  {\mathbb{R}^{2n}},$ where $\zeta=(y_1, x_2, y_2, \ldots , x_n, y_n) \in  {\mathbb{R}^{2n-1}}.$
After a unitary transformation of $\CC^n,$ if necessary,  we may assume the inner normal to $\partial D$ at $0$ points the positive $x_1$ direction,   $ T^c_0(\partial D)=\{ z \in \CC^n :  z_1 =0 \}.$
Let $ \pi: \CC^n \to N_0$ be an orthogonal projection, i.e., if $z=(z_1,  \ldots, z_n)$ then  $\pi(z)=(z_1, 0, \ldots, 0).$

Without loss of generality, there is a real valued $C^2$ function $\psi$ defined on $T_0(\partial D)=\{ (0,\zeta), \zeta \in  {\mathbb{R}^{2n-1}}\}$ so that $\partial D =\{ (\psi(\zeta),\zeta), \zeta \in  {\mathbb{R}^{2n-1}}\}$ and $D=\{ (x_1, \zeta), x_1 >\psi(\zeta)\}.$ (This is certainly true in the neighborhood of $0$ by the implicit function theorem, and our concerns are purely local here.) The fact that $T_0(\partial D)$ is tangent to $\partial D$ at $0$ implies $\nabla \phi(0)=0.$

For $z=(x_1, \zeta) \in D$ we set
$$d(z)=\min \{x_1,x_1-\psi(\zeta)\},$$
and define an approach region
\begin{equation}\label{adr}
A_{\alpha}(\xi) = \{ \, z \in D  : |z|^2<\alpha d(z), |y_1|<\alpha x_1 \}.
\end{equation}

The regions $A_{\alpha}(\xi)$ are "equivalent" to the admissible approach regions  (see \cite[Lemma 5.2]{[WR]}) in the sense that
$$\mathcal A_{\beta(\alpha)}(\xi)\subseteq A_{\alpha}(\xi)\subseteq \mathcal A_{\gamma(\alpha)}(\xi).$$

Set
$$({\bigtriangledown} F)^2= d^2(z)|{\bigtriangledown}_1 F(z)|^2+d(z)|{\bigtriangledown}_{2,n} F(z)|^2,$$
where
$$|{\bigtriangledown}_1 F(z)|^2=\Big|\frac{\partial F}{\partial {z}_1}(z)\Big|^2, \ \ \
|\bigtriangledown_{2,n} F(z)|^2=\sum_{j=2}^n\Big|\frac{\partial F}{\partial {z}_j}(z) \Big|^2.$$

We begin with proposition.

\begin{proposition} \label{prop} Let $D$ be a  domain in $\CC^n,$ $ n>1,$ with $C^2$-smooth boundary. Suppose that the function $f\in \OO(D)$ has a limit $L$ along the normal $\nu_\xi$ to $\partial D$  at the point $\xi$
equal to $L,$ $ L\neq \infty.$ If $$(1+|f(z)|^2)^{-1}{\nabla} f(z)$$ is admissible bounded at $\xi,$ then $f$ admissible bounded at $\xi.$
\end{proposition}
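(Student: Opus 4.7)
I argue by contradiction via Marty's normal-family criterion, applied to an anisotropic rescaling of $f$ near $\xi$ whose scaling factors exactly match the weights in the definition of $(\nabla f)^2 = d^2|\nabla_1 f|^2 + d|\nabla_{2,n} f|^2$. Suppose $f$ is not admissible bounded: then there exist $\alpha>0$ and a sequence $\{z^j\}\subset \mathcal{A}_\alpha(\xi)$ with $z^j\to\xi$ and $|f(z^j)|\to\infty$. After the standard local normalisation ($\xi=0$, inner normal along $+x_1$, $\partial D=\{x_1=\psi(\zeta)\}$ with $\psi(0)=|\nabla\psi(0)|=0$), write $z^j=(x_1^j,\zeta^j)$, $d_j=d(z^j)$, and $w^0_j:=-\zeta^j/\sqrt{d_j}$, so that $|w^0_j|\le\sqrt{\alpha}$.

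I introduce the rescaling
\[
\phi_j(w_1,w'):=\bigl(x_1^j+d_j w_1,\;\zeta^j+\sqrt{d_j}\,w'\bigr),
\]
defined on a fixed polydisc $\Omega=\{|w_1|<r_1,\,|w'|<r_2\}$ with $r_2>\sqrt{\alpha}$, chosen so that $(0,w^0_j)\in\Omega$ (its image under $\phi_j$ being the normal point $(x_1^j,0)$) and $\phi_j(\Omega)\subset \mathcal{A}_{\alpha'}(\xi)$ for some $\alpha'=\alpha'(\alpha,r_1,r_2)$. Setting $g_j:=f\circ\phi_j$, the chain rule gives $|\partial g_j/\partial w_1|^2=d_j^2|\nabla_1 f|^2$ and $\sum_{\mu\ge 2}|\partial g_j/\partial w_\mu|^2 = d_j\,|\nabla_{2,n} f|^2$, which together with $d(\phi_j(w))\asymp d_j$ on compacts of $\Omega$ (from the $C^2$ geometry of $\partial D$) shows that $g_j^\sharp$ is uniformly bounded on $\Omega$ by the admissible bound $C$ on $(1+|f|^2)^{-1}\nabla f$ over $\mathcal{A}_{\alpha'}(\xi)$. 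Marty's theorem then yields a subsequence $g_{j_k}\to g$ locally uniformly in the chordal metric, with $g$ either meromorphic on $\Omega$ or $\equiv\infty$.

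Next I use the normal-limit hypothesis to pin $g$ down on a complex slice. For real $w_1\in(-r_1,r_1)$, $\phi_{j_k}(w_1,w^0_{j_k})=(x_1^{j_k}+d_{j_k}w_1,0)$ lies on the normal at depth tending to $0$, so $g_{j_k}(w_1,w^0_{j_k})\to L$. Passing to a further subsequence with $w^0_{j_k}\to w^0\in\overline{\Omega}$ and invoking chordal-uniform convergence on compacts, $g(w_1,w^0)=L$ for real $w_1$ in an interval; one-variable analytic continuation in $w_1$ extends this to $g\equiv L$ on the complex disc $\{|w_1|<r_1\}\times\{w^0\}$. In particular $g\not\equiv\infty$, so $g$ is meromorphic. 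But $g(0,0)=\lim_k f(z^{j_k})=\infty$, so $g$ has a pole at the origin.

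The contradiction follows from integrating the uniform bound $g^\sharp\le C$ along the straight segment from $(0,0)$ to $(0,w^0)$:
\[
\tfrac{1}{\sqrt{1+|L|^2}}=\chi(\infty,L)=\chi\bigl(g(0,0),g(0,w^0)\bigr)\le C|w^0|\le C\sqrt{\alpha},
\]
impossible once $\alpha$ is sufficiently small relative to $C$ and $|L|$. This gives admissible boundedness on small $\mathcal{A}_\alpha(\xi)$; the extension to every $\alpha>0$ is a bootstrap, since once $|f|\le M_0$ on $\mathcal{A}_{\alpha_0}(\xi)\cap U$ the hypothesis becomes a Euclidean gradient bound on the smaller region, which can be transferred outward by integration along tangential segments joining points of $\mathcal{A}_\alpha(\xi)$ to points of $\mathcal{A}_{\alpha_0}(\xi)$. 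The principal obstacle is the analytic-continuation step that promotes the slice identity $g(\cdot,w^0)=L$ from a real curve to the complex disc—this is what rules out $g\equiv\infty$ and makes the chordal-oscillation contradiction possible; the subsequent bootstrap to arbitrary $\alpha$ is routine but requires care about the $\alpha$-dependence of the constants.
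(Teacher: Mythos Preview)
Your overall strategy—anisotropic rescaling plus Marty's criterion—is the same as the paper's, but the organization differs: the paper covers the tangential segment $[z^m,\pi(z^m)]$ by a \emph{chain} of at most $N(\alpha)+1$ small polydiscs and propagates the divergence $G^m\to\infty$ backward step by step until it collides with $f(\pi(z^m))\to L$ (having first upgraded the radial limit to a non-tangential limit on the complex normal disc via Theorem~\ref{thm1}). That propagation works for every aperture $\alpha$.

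Your single-polydisc version has a real gap at the contradiction step. The inequality $\chi(\infty,L)\le C|w^0|\le C\sqrt{\alpha}$ is a contradiction only when $C\sqrt{\alpha}<(1+|L|^2)^{-1/2}$, but $C=C_{\alpha'}$ is the admissible bound on the larger region $\mathcal A_{\alpha'}(\xi)$ with $\alpha'=\alpha'(\alpha,r_1,r_2)$, and you have no control over how $C_{\alpha'}$ grows. Since the contradiction hypothesis only supplies \emph{some} $\alpha$ (possibly large), this does not close. Your bootstrap does not repair it either: knowing $|f|\le M_0$ on $\mathcal A_{\alpha_0}$ yields a Euclidean gradient bound \emph{only on} $\mathcal A_{\alpha_0}$, while the tangential segment from $z\in\mathcal A_\alpha$ toward the normal lies in $\mathcal A_\alpha$, not $\mathcal A_{\alpha_0}$; along it you again only have the spherical bound $C_\alpha$, and the same obstruction recurs. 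The clean fix is to sharpen your dichotomy: since each $g_{j_k}$ is \emph{holomorphic} on $\Omega$, a spherically-locally-uniform limit $g$ is either holomorphic or $\equiv\infty$ (not merely meromorphic). Your slice identity $g(\cdot,w^0)\equiv L$ rules out $g\equiv\infty$, so $g$ is holomorphic—immediately contradicting $g(0,0)=\lim f(z^{j_k})=\infty$. No smallness of $\alpha$ is needed and the bootstrap disappears.

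There is also a coordinate slip. With the holomorphic rescaling $\phi_j(w)=(z_1^j+d_jw_1,\ z_\mu^j+\sqrt{d_j}\,w_\mu)$ one has first coordinate $z_1^j+d_jw_1$, whose imaginary part is $y_1^j+d_j\,\mathrm{Im}\,w_1$, not $0$; so $\phi_j(w_1,w_j^0)$ does \emph{not} lie on the real normal for real $w_1$. You must absorb $y_1^j$ as well, e.g.\ evaluate at $w_1=s-iy_1^j/d_j$ for real $s$ (this is bounded since $|y_1^j|<\alpha x_1^j$ and $d_j\gtrsim x_1^j$). The paper handles the $y_1$-component differently, by first invoking the one-variable Theorem~\ref{thm1} on the complex normal disc to get $f(\pi(z))\to L$ non-tangentially.
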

\begin{proof}  Assume $\xi=0.$  Since the domain $D$ has $C^2$-smooth boundary, then there is a constant $r>0$ such that the ball $B_r(-r\nu_0)\subset D$ and $\partial B_r(-r\nu_0)\cap \partial D=\{0\}.$

Let the function $f$ has the finite  limit $L$ along the normal $\nu_0$ to $\partial D$  at the point $0.$
Since $d(z)\geq |r-z_1|\geq r-|z_1|\geq \frac{1}{2r}(r^2-|z_1|^2)$ for all $z\in B_r(-r\nu_0)$ sufficiency close to $0$  we have $$(r^2-|z_1|^2)\frac{|\frac{\partial f}{\partial {z}_1}(\pi(z))|}{1+|f(\pi(z))|^2}<\frac{{\nabla} f(\pi(z))}{1+|f(\pi(z))|^2}<O(1),  \ \ \  z \in   A_\alpha(0)\cap N_0^c(\partial D).$$
Therefore $f(\pi(z))$ fulfills all the hypotheses of Theorem \ref{thm1}. Hence $f(\pi(z))\to L $ as $z\to 0,$ $z \in   A_\alpha(0)\cap N_0^c(\partial D).$

Assume, to reach a contradiction,
 that $f$ is not admissible bounded at $0.$ Let $\{z^m\}$ be any sequence of points from $ A_\alpha(0)$ such that $z^m\to 0$ as $m \to \infty$ and $f(z^m)\to \infty$ as $m \to \infty.$

For the biholomorphic mapping $\Phi_b(z)=(w_1(z), \ldots, w_n(z)),$ where $w_1(z)=\frac{z_1-b_1}{2c d(b)},$ $w_\mu(z)=\frac{z_\mu-b_\mu}{2c \sqrt{d(b)}},$ $\mu=2, \ldots, n,$ the polydisc $$P(b,c)=\{z\in \CC^n : |z_1-b_1|<cd(b), |w_\mu-b_\mu|<c \sqrt{d(b)}, \mu=2, \ldots, n,\}$$ is mapped to the unit polydisc $U^n=\{w\in \CC^n : |w_\mu|<1, \mu= 1,\ldots, n\}.$ By \cite[Lemma 7.2]{[WR]} the exists $c=c(\alpha)$ such that $P(b,c) \subset A_{2\alpha}(0)$ for all sufficiently small $b \in A_\alpha(0).$ Therefore with each point $b\in  A_\alpha(0)$ sufficiently close to $0$ we can associate a function $g_b=f(\Psi^{-1}_b(w))$ which is well defined and holomorphic in polydisc $U^n.$

By \cite[Lemma 5.2]{[WR]} there exists $c=c(\alpha)$ so that if $z=(x_1,\zeta)$ sufficiently small and $|z|<\alpha d(z)$ we have $d(z)\geq cx_1.$
Let $t$ be an arbitrary point of the interval $[z^m, \pi(z^m)].$ Note that $x^m_1\geq d(t)\geq cx^m_1.$

Choose an integer $N$ such that  $\alpha < cN/2.$ From the definitions of the set
$ A_\alpha(0)$ it follows that $|z^m-\pi(z^m)|^2<cN x^m_1/2.$
Then any interval $[z^m, \pi(z^m)]$ may be covered by $k_m$ polydiscs, where $k_m< N+1,$
$$P_{m,k}(c)=P(b^{m,k},c)=\atop \{z\in \CC^n :|z_1-b^{m,k}_1|<cd(b^{m,k}), |z_\mu-b^{m,k}_\mu|<c \sqrt{d(b^{m,k})}, \mu= 2 ,\ldots, n\}$$
such that $b^{m,1}=(z^m_1, '0),$ $b^{m,k_m}=z^m,$ $ b^{m,k} \in [z^m, \pi(z^m)],$ $k=2, \ldots, k_m-1,$ $P_{m,k}(c/2)\ni b^{m,k+1}$ (and hence $P_{m,k}(c/2)\cap P_{m,k+1}(c/2)\neq \emptyset$) for all $m\geq 1,$ $k<k_m.$ To each point $ b^{m,k}$ we associate a function $g_{m,k}=g_{b^{m,k}}$ as above.

Set $G^m=g_{m,k_m},$ $ m\geq 1 .$ Since $f(z^m)\to \infty$ as $m\to \infty$  and $P_{m,k_m}(c)\ni z^m $ we have $g_{m,k_m}(0)=f(z^m)\to \infty$ as $m \to \infty.$ Suppose that there is a sequence of points $\{w^m\}$ which belongs to some  polydisc $P_2,$
$\overline{P}_2\subset U^n,$ such that $G^m(w^m)\not \to \infty$ as $m \to \infty.$ It follows that the family $\{G^m\}$ is not normal in $U^n$ and by Marty's criterion (see, e.g., \cite{[D1]}) there are points $p^m \in \overline{P}_2$ and vectors $v^m \in \CC^n$ with $|v^m|=1$  such that
\begin{equation}\label{e21}
\frac{(dG^m_{{p}^m}(v^m),dG^m_{{p}^m} (v^m))}{(1+|G^m({p}^m)|^2)^2}>m,
\ \ \ (m=1,2, \ldots ),
\end{equation}
where
$$dG^m_{{p}^m} (v^m)=\sum_{\mu=1}^n\frac{\partial G^m}{\partial w_\mu }(p^m)v^m_\mu.$$
According to the rule of differentiation of composite functions
$$ \frac{\partial G^m}{\partial w_1}(p^m)=c d(b^{m,1}) \frac{\partial f}{\partial z_1}(t^m)
\atop \frac{\partial G^m}{\partial w_\mu}(p^m)=c \sqrt{d(b^{m,1})} \frac{\partial f}{\partial z_\mu}(t^m), ( \mu=2,\ldots, n),
 $$
where $t^m=\Psi^{-1}_{b^{m,1}}(p^m)\in P_{m,1}(c)\subset  A_{2\alpha}(0). $ By \cite[Lemma 5.2]{[WR]}
there exists $c_1=\min \{1/2, 1/2K\alpha \}$ so that if $z=(x_1,\zeta) \in A_{2\alpha}(0)$ is sufficiently small then
$x_1>d(z)\geq c_1x_1.$ Since $b^{m,1}_1=x^m_1$ and $(1-c)x^m_1\leq Re \, t^m_1\leq (1+c)x^m_1$ we have
$$ \frac{c_1}{1+c}\leq\frac{d(b^{m,1})}{ d(t^m)}\leq \frac{1}{c_1(1-c)}.$$
This, together with  the Bunyakovski\u{i}-Schwarz inequality, implies from (\ref{e21}) that
$$O(1)\frac{({\nabla} f(t^m))^2}{(1+|f(t^m)|^2)^2}>m.$$
It follows that $\frac{({\nabla} f(z))^2}{(1+|f(z)|^2)^2}$ is not admissible bounded in $0,$ a contradiction with hypothesis of the theorem. Therefore sequence $\{G^m\}$
converges  uniformly on compact subsets of $U^n$ to $\infty.$ Put now $G^m =g_{m,\{k_m-1\}},$ $m\geq 1.$ (Note that we set $g_{m,\{k_m-1\}}\equiv g_{m,k_m}$ if $k_m-1\leq 0.$) Since $P_{m,k_m-1}(c/2)\cap P_{m,k_m}(c/2)\neq \emptyset$ we  have $G^m(0) \to \infty$ as $m \to \infty$ and we may repeat the above
argument.  After
finite number of steps the proof will be completed since $P_{m,1}(c)\ni \pi{(z^m)}$ and $f(\pi{(z^m)})\to L$ as $m \to \infty.$ We get
${({\nabla} f(z))^2}/{(1+|f(z)|^2)^2}$ is not admissible bounded in $0,$ contrary to the hypothesis on ${{\nabla} f(z)}/{(1+|f(z)|^2)}.$
This contradiction proves our claim.
\end{proof}

\begin{theorem} \label{t1} Let $D$ be a  domain in $\CC^n,$ $ n>1,$ with $C^2$-smooth boundary. If a function $f$ holomorphic in $D$
has a limit along the normal $\nu_\xi$ at a point $\xi \in \partial D,$ then it has an admissible limit at this point
if and only if for every $\alpha> 0$
\begin{equation}\label{e1}
(1+|f(z)|^2)^{-1}{\nabla} f(z)\to 0
\end{equation}
as $z\to \xi,$ $ z\in  A_\alpha(\xi).$
\end{theorem}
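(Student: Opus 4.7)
The theorem is an equivalence, so I would prove the two directions separately.

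\textbf{Forward direction} (admissible limit $\Rightarrow$ (\ref{e1})). Assume $f$ has admissible limit $L$ at $\xi$. By invariance of $\nabla f/(1+|f|^2)$ under $f\mapsto 1/f$, reduce to $L\ne \infty$. Then $f$ is admissibly bounded and, given $\varepsilon>0$, one has $|f-L|<\varepsilon$ on every polydisc $P(z,c)\subset A_{2\alpha}(\xi)$ as soon as $z\in A_\alpha(\xi)$ is close enough to $\xi$. Applying the Cauchy estimates (\ref{Cest}) to $f-L$ on $P(z,c)$ gives $|\partial f/\partial \widetilde z_1(z)|\le \varepsilon/(c\,d(z))$ and $|\partial f/\partial \widetilde z_\mu(z)|\le \varepsilon/(c\sqrt{d(z)})$ for $\mu\ge 2$; hence $(\nabla f(z))^2=O(\varepsilon^2)$, and dividing by $(1+|f(z)|^2)^2\ge 1$ yields (\ref{e1}).

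\textbf{Reverse direction.} After again reducing to $L\ne \infty$, Proposition \ref{prop} makes $f$ admissibly bounded at $\xi$, and the argument inside its proof (via Theorem \ref{thm1} applied to the restriction of $f$ to the complex normal disk) also supplies $f(\pi(z))\to L$ as $z\to\xi$ inside $A_\alpha(\xi)\cap N^c_\xi(\partial D)$. I argue by contradiction: assume $\{z^m\}\subset A_\alpha(\xi)$ with $z^m\to \xi$ and $f(z^m)\not\to L$. By admissible boundedness, pass to a subsequence with $f(z^m)\to L'$ for some $L'\in \CC$, $L'\ne L$.

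The engine is the polydisc chain from Proposition \ref{prop}. Cover each segment $[\pi(z^m),z^m]$ by at most $N+1$ overlapping polydiscs $P_{m,k}(c)$ centered at $b^{m,k}$, with $b^{m,1}=\pi(z^m)$, $b^{m,k_m}=z^m$, and $b^{m,k+1}\in P_{m,k}(c/2)$, and pass to a further subsequence so that $k_m\equiv k^*$ is constant. Set $g_{m,k}(w)=f(\Phi_{b^{m,k}}^{-1}(w))$ on $U^n$. The chain rule, combined with the uniform containment $P_{m,k}(c)\subset A_{2\alpha}(\xi)$ (Lemma 7.2 of \cite{[WR]}), bounds the Marty spherical derivative of $g_{m,k}$ by a constant multiple of $(\nabla f)/(1+|f|^2)$ evaluated at the preimage, so hypothesis (\ref{e1}) forces $g_{m,k}^\sharp\to 0$ uniformly on compact subsets of $U^n$ as $m\to\infty$. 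By Marty's criterion $\{g_{m,k}\}_m$ is normal, and every subsequential limit is a constant $c_k$. Induct on $k$: $c_1=\lim g_{m,1}(0)=\lim f(\pi(z^m))=L$; since $\Phi_{b^{m,k}}(b^{m,k+1})$ lies in the fixed compact set $\{w:|w_\mu|\le 1/2 \text{ for all }\mu\}$, uniform convergence gives $f(b^{m,k+1})=g_{m,k}(\Phi_{b^{m,k}}(b^{m,k+1}))\to c_k$; but also $g_{m,k+1}(0)=f(b^{m,k+1})\to c_{k+1}$, so $c_{k+1}=c_k$. After $k^*$ steps $c_{k^*}=L$, contradicting $g_{m,k^*}(0)=f(z^m)\to L'\ne L$.

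\textbf{Main obstacle.} The delicate point is to upgrade the pointwise admissible decay in (\ref{e1}) to decay of the Marty spherical derivatives of the $g_{m,k}$ that is \emph{uniform on compact subsets of $U^n$}; this uniformity is what lets Marty's theorem identify every subsequential limit as a constant. It rests on the uniform containment $P_{m,k}(c)\subset A_{2\alpha}(\xi)$ provided by the geometric lemmas of \cite{[WR]} already used in Proposition \ref{prop}. Tracking the nested subsequences across the $k^*\le N+1$ inductive steps is routine since $k^*$ is bounded independently of $m$.
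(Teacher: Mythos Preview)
Your forward direction coincides with the paper's: Cauchy estimates on admissible polydiscs, together with the invariance of $(1+|f|^2)^{-1}\nabla f$ under $f\mapsto 1/f$ to absorb the case $L=\infty$.

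For the reverse direction with $L\ne\infty$ your route is genuinely different. The paper argues \emph{directly}: it writes $f(z)=\{f(z)-f(\pi(z))\}+f(\pi(z))$, bounds the first bracket by $|z-\pi(z)|\sup_{t}|\nabla_{2,n}f(z(t))|$ along the segment $z(t)=(z_1,tz_2,\ldots,tz_n)$, and uses Proposition~\ref{prop} only to replace $|\nabla_{2,n}f|$ by $O(1)\,(1+|f|^2)^{-1}\nabla f$, which is $o(1)$ by (\ref{e1}). You instead rerun the polydisc chain of Proposition~\ref{prop}, this time exploiting the full strength of (\ref{e1}) to drive $g_{m,k}^\sharp\to 0$ uniformly on compacta, so every subsequential limit is constant and the constants propagate along the chain. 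Both are valid; the paper's estimate is shorter, while your Montel/Marty argument has the virtue of treating finite and infinite $L$ in one stroke.

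That said, there is a gap in the sentence ``After again reducing to $L\ne\infty$.'' In the forward direction the reduction via $1/f$ is legitimate because an admissible limit $\infty$ forces $|f|>1$ on $A_\alpha(\xi)$ near $\xi$, so $1/f$ is holomorphic there. In the reverse direction you only know the \emph{normal} limit is $\infty$; $f$ may vanish throughout every admissible region, and then $1/f$ is not available. The paper handles $L=\infty$ separately (its part~(b)) by running precisely the normal-families chain you wrote out for $L\ne\infty$. Your own argument covers $L=\infty$ verbatim once you drop the appeal to Proposition~\ref{prop} (which you never actually use beyond extracting a subsequential limit) and allow the constants $c_k$ to lie in $\overline{\CC}$: start from $c_1=\lim f(\pi(z^m))=\infty$ and chain to $c_{k^*}=\infty$, contradicting $f(z^m)\to L'\in\CC$.
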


\begin{proof} \emph{Necessity.}  Assume $\xi=0,$ without loss of generality. First, let $f$ has finite admissible limit $L$ at $0.$  Without loss of generality, assume   $L=0$ at $0.$
Let $P_1(z)$ denote the polydisc centered
at $z,$ whose radii are essentially $c x_1,$ $c\sqrt{x_1},$ $\ldots,$ $c\sqrt{x_1},$
with $c$ sufficiently small. By \cite[Lemma 7.2] {[WR]} exists $c=c(\alpha)$ such that  $P_1(z) \subset A_{2\alpha }(\xi).$ Let $P(z)$ denote the polydisc centered
at $z,$ whose radii are essentially $c d(z),$ $c\sqrt{d(z)},$ $\ldots,$ $c\sqrt{d(z)}.$ Since $d(z)=\min\{x_1, x_1-\psi(\zeta)\}\leq x_1$ we have $P(z)\subseteq P_1(z)\subset D.$ The one variable Cauchy's estimate shows that
$${|\bigtriangledown}_1 f(z)|\leq \frac{\sup_{\{w\in P(z)\}}|f(w)|}{cd(z)}, \atop
{|\bigtriangledown}_{2,n} f(z)|\leq \frac{\sup_{\{w\in P(z)\}}|f(w)|}{c\sqrt{d(z)}}.$$
Since $f(z)\to 0$ as $z \to 0,$ $z \in  A_\alpha(0),$ we have
$${\nabla} f(z)\to 0$$ as $z \to 0,$ $z \in  A_\alpha(0).$
It remains to observe that  $\nabla f(z)\geq(1+|f(z)|^2)^{-1}\nabla f(z).$

If the function $f$ has an admissible limit at the point $0$ equal to infinity, then for any
$\alpha > 0$ there is a $\varepsilon > 0$ such that $1/f \in \OO( A_\alpha(0))\cap B_\varepsilon(0)).$ The function $F=1/f$ has an admissible
limit equal to zero at the point $0,$ so, as we have proved, $F$ satisfies (\ref{e1}). It remains to  observe that outside the zeros of $f$ we obviously have
$(1+|F(z)|^2)^{-1}\nabla  F(z)=(1+|f(z)|^2)^{-1}\nabla  f(z).$

\emph{Sufficiency.}
\textbf{(a)} Suppose that the function f has a limit $L$ along the normal $\nu_0$ to $\partial D$  at the point $0$
equal to $L,$ $ L\neq \infty.$

 We may assume, without loss of generality,
that $L=0.$
Write
$$f(z) = \{f(z) - f(z_1,0,\ldots, 0)\} + f(z_1, 0,\ldots, 0).$$  The first term on
the right side is dominated by
$|z(1) - z(0)| \sup_{\{0 < t < 1\}} {|{\bigtriangledown}_{2,n}f(z(t))|}, $ where $z(t) = (z_1,z_2 t ,\ldots,z_n t ),$ $t \in [0,1],$
If $z \in  A_\alpha(0),$ then by \cite[Lemma 7.3]{[WR]} $z(t) \in  A_\alpha(0),$ $t \in [0,1],$ and there $d(z(t)) \approx d(z)$
while $|z(1) - z(0)| < \alpha\sqrt{ d(z)}.$ (The expression
$A\approx B$ means that there are positive constants $c_1$ and $c_1$ such that $c_1A < B < c_2A.$) By Proposition \ref{prop} $f$ is admissible bounded in $0$ and therefore
$$|z(1) - z(0)| \sup_{\{0 < t < 1\}} |\bigtriangledown_{2,n}f(z(t))| \leq O(1)\frac{{\bigtriangledown}f(z(t_0))}{1+|f(z(t_0)|^2},$$
 where $0\leq t_0\leq 1.$
Since  ${\bigtriangledown}f(z(t_0))/(1+|f(z(t_0))|)\to 0$ as $z(t_0) \to 0 $
we have that $f(z) - f(z_1,0,\ldots, 0)\to 0$ as $z \to 0$ in $ A_\alpha(0).$ Since $f(z_1, 0,\ldots, 0)\to 0$ as $z \to 0$
in $ A_\alpha(0)$ we conclude that
$$\lim_{ A_\alpha(0) \ni z \to 0} f(z)=0.$$

 The above proof is quite analogous to the proof in  \cite[p, 68]{[sT]}.

\textbf{(b)} Let the function $f$ has the infinite  limit  along the normal $\nu_0$ to $\partial D$  at the point $0.$ Let $\{z^m\}$ be any sequence of points from $ A_\alpha(0)$ such that $z^m\to 0$ as $m \to \infty.$
As in the proof of Proposition \ref{prop} let $\{G^m\},$ be a sequence of function defined on $U^n.$  Then as in Proposition \ref{prop} we obtain $f(z^m)\to\infty$ as $m \to \infty.$
Since the
sequence of points $\{z^m\}$ was arbitrary, by definition this means that  $f$ has the admissible limit equal to infinity at the point $0.$ The theorem is proved. \end{proof}

 For each $z$ near $\partial D$ denote by  $\zeta(z)$  the point on $\partial D$ closest to $z.$ Choose the coordinate system $\widetilde{z}_1,\ldots, \widetilde{z}_n$
such that $\zeta(z)=0,$ and
$\{\widetilde{z}\in \CC^n : (\widetilde{z}_1, 0 \ldots, 0)\} = N^c_0 (\partial D),$ and $\{\widetilde{z}\in \CC^n : (0,\widetilde{z}_2,\ldots, \widetilde{z}_n)\} = T^c_0(\partial D),$
and $\nu_0=(1, 0, \ldots, 0).$
Denote by $grad_{\CC} F=\Big(\frac{\partial F}{\partial \widetilde{z}_1}, \ldots, \frac{\partial F}{\partial \widetilde{z}_n} \Big)$ the  complex gradient of function $F.$
Write also
$$|\widetilde{\bigtriangledown}_1 F|^2=\Big|\frac{\partial F}{\partial \widetilde{z}_1}\Big|^2, \atop
|\widetilde{\bigtriangledown}_{2,n} F|^2=\sum_{j=2}^n\Big|\frac{\partial F}{\partial \widetilde{z}_j}\Big|^2.$$
Then $ |grad_{\CC}F|^2=|\widetilde{\bigtriangledown}_1 F|^2+|\widetilde{\bigtriangledown}_{2,n} F|^2$
but this splitting varies (with the decomposition $\CC^n=N_{\zeta(z)}\oplus T^c_{\zeta(z)}$ ) as
$z$ varies in $A_\alpha(\xi).$

We need to observe that (the proof is the same as in \cite[pp. 61-62]{[sT]})
\begin{equation}\label{inst}
d^2(z)|\bigtriangledown_1 F|^2+d (z)|\bigtriangledown_{2,n} F|^2 \approx d^2(z)|\widetilde{\bigtriangledown}_1 F|^2+d (z)|\widetilde{\bigtriangledown}_{2,n} F|^2 \ \
\ \ (z \in A_\alpha(\xi)).
\end{equation}
We write $A\approx B$
if the ration $|A|/|B|$ is bounded between two positive constants.

We call $$\frac{\Big|\frac{\partial f}{\partial \widetilde{z}_1}(z)\Big|}{1+|f(z)|^2} \textrm{ and }
\frac{\Big|\frac{\partial f}{\partial \widetilde{z}_\mu}(z)\Big|}{1+|f(z)|^2} \ \  (\mu=2, \ldots, n)$$
the spherical derivative of $f(z)$ in the normal and  complex tangent direction, respectively.
From (\ref{inst}) follows that Theorem \ref{t1} is actually equivalent to:
\begin{theorem} \label{teor2} Let $D$ be a  domain in $\CC^n,$ $ n>1,$ with $C^2$-smooth boundary.  If a holomorphic function $f$ has a limit along
the normal to $\partial D$ at the point $\xi,$ then at the point $\xi\in \partial D$ the function $f$ has an admissible
limit if and only if in every admissible domain with vertex $\xi$ the spherical derivative of $f$ in the normal and complex tangent directions
increases like $o(1/d(z))$ and $o(1/\sqrt{d(z)}),$ respectively.
\end{theorem}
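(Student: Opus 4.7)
The plan is to deduce Theorem \ref{teor2} directly from Theorem \ref{t1} using the coordinate-change equivalence (\ref{inst}); no new analysis is required beyond reinterpreting the quantity $(1+|f(z)|^2)^{-1}\nabla f(z)$.

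By the very definition of $\nabla f$ used in Theorem \ref{t1},
$$\frac{(\nabla f(z))^2}{(1+|f(z)|^2)^2}=\frac{d^2(z)|\bigtriangledown_1 f(z)|^2+d(z)|\bigtriangledown_{2,n}f(z)|^2}{(1+|f(z)|^2)^2},$$
so condition (\ref{e1}) says exactly that this quotient tends to $0$ as $z\to\xi$ in $A_\alpha(\xi)$. Applying (\ref{inst}) replaces the numerator, up to two-sided positive constants, by the expression $d^2(z)|\widetilde{\bigtriangledown}_1 f(z)|^2+d(z)|\widetilde{\bigtriangledown}_{2,n}f(z)|^2$ adapted to the moving frame at $\zeta(z)$. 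Since both summands are non-negative, the whole sum, divided by $(1+|f(z)|^2)^2$, tends to $0$ if and only if each summand does, i.e.\ precisely when
$$\frac{|\widetilde{\bigtriangledown}_1 f(z)|}{1+|f(z)|^2}=o\!\left(\frac{1}{d(z)}\right)\quad\text{and}\quad \frac{|\widetilde{\bigtriangledown}_{2,n} f(z)|}{1+|f(z)|^2}=o\!\left(\frac{1}{\sqrt{d(z)}}\right).$$
These are the growth conditions on the spherical derivative in the normal and complex tangential directions in the statement of Theorem \ref{teor2}, so Theorem \ref{t1} delivers both implications once we retain the hypothesis of a limit along $\nu_\xi$.

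The main point to verify with care is the comparison (\ref{inst}) itself, which the paper invokes as an analogue of \cite[pp.~61--62]{[sT]}. The $C^2$-smoothness of $\partial D$ controls the angle between $\nu_\xi$ and $\nu_{\zeta(z)}$ by $O(\sqrt{d(z)})$ inside any admissible region, and a direct computation shows that after the anisotropic weighting by $d(z)$ on the normal component and $\sqrt{d(z)}$ on the tangential components, the change-of-frame matrix between the fixed splitting at $\xi$ and the moving splitting at $\zeta(z)$ is bounded above and below by absolute constants depending only on $\alpha$ and the defining function. That is the single place where one must actually do work; everything else in the argument is formal.
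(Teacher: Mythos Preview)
Your proposal is correct and follows exactly the paper's approach: the paper does not give a separate proof of Theorem \ref{teor2} but simply states that ``From (\ref{inst}) follows that Theorem \ref{t1} is actually equivalent to'' Theorem \ref{teor2}. Your write-up is in fact more detailed than the paper's, since you spell out why the vanishing of the weighted sum is equivalent to the two separate $o$-conditions and you sketch the mechanism behind (\ref{inst}) (the $O(\sqrt{d(z)})$ control on the angle between $\nu_\xi$ and $\nu_{\zeta(z)}$ inside $A_\alpha(\xi)$), whereas the paper merely cites \cite[pp.~61--62]{[sT]} for that comparison.
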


The  example in the beginning of this article shows that  the Lindel\"{o}f principle for bounded functions  -- formulated in terms of admissible convergence -- fails. However the following refinement of Lindel\"{o}f's theorem holds.

\begin{theorem}\label{t3} Let $D$ be a  domain in $\CC^n,$ $ n>1,$ with $C^2$-smooth boundary. If a function $f$  in $D$
has a limit $L,$ $L\in \overline{\CC},$ along the normal $\nu_\xi$ at a point $\xi \in \partial D,$  and in every admissible domain with vertex $\xi$ the function $f$ is holomorphic, $L$ is his omitted value  and the spherical derivative of $f$ in the normal and complex tangent directions
grows no faster than $K/d(z)$ and $K/\sqrt{d(z)},$ respectively, then $f$ has an admissible limit $L$ at
 $\xi.$
\end{theorem}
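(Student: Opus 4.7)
The plan is to upgrade the normal-families argument of Proposition \ref{prop}: the $O$-bounds now guarantee normality of the rescaled family (rather than forcing a contradiction as before), and the omitted-value hypothesis, combined with Hurwitz's theorem, forces the cluster functions to be constantly $L$. Assume $\xi=0$. If $L=\infty$, then $L$ being omitted means $f$ has no poles on admissible domains with vertex $0$, and the identity $(1+|F|^2)^{-1}\nabla F = (1+|f|^2)^{-1}\nabla f$ for $F=1/f$ reduces the problem to the finite case. Replacing $f$ by $f-L$, we may further assume $L=0$ with $f$ zero-free on every $A_\alpha(0)$.

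First I would apply Theorem \ref{thm1} to the restriction of $f$ to the complex normal line $N_0^c$. The osculating ball $B_r(-r\nu_0)\subset D$ tangent to $\partial D$ at $0$, together with the comparability $d(z)\approx r-|z_1|$ on the disc $B_r(-r\nu_0)\cap N_0^c$, converts the hypothesis $|\partial f/\partial \widetilde z_1|/(1+|f|^2) = O(1/d(z))$ into the one-variable non-tangential bound $f^\sharp(z_1) = O(1/(1-|z_1|/r))$. Since the radial limit of this restriction at $0$ equals $0$, Theorem \ref{thm1} yields a non-tangential limit of $0$ in $N_0^c$; in particular $f(z_1^m,\,'0)\to 0$ whenever $(z_1^m,\,'0)\to 0$ with $|y_1^m|<\alpha x_1^m$. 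This will serve as the base of the induction.

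Next, for an arbitrary sequence $\{z^m\}\subset A_\alpha(0)$ with $z^m\to 0$, I would cover each segment $[\pi(z^m),z^m]$ by $k_m\leq N$ overlapping polydiscs $P_{m,k}(c)$ and form the rescaled functions $g_{m,k}=f\circ\Phi_{b^{m,k}}^{-1}$ on $U^n$, exactly as in Proposition \ref{prop}. The chain-rule computation carried out there, combined with the growth bounds $|\partial f/\partial \widetilde z_1|/(1+|f|^2)=O(1/d(z))$ and $|\partial f/\partial \widetilde z_\mu|/(1+|f|^2)=O(1/\sqrt{d(z)})$, shows that the spherical derivative of $g_{m,k}$ is uniformly bounded on every fixed compact subset of $U^n$. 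By Marty's criterion each family $\{g_{m,k}\}_m$ is then normal in $U^n$.

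Finally I would propagate convergence along the chain by Hurwitz. At $k=1$ we have $g_{m,1}(0)=f(z_1^m,\,'0)\to 0$ by the previous step, and since each $g_{m,1}$ is zero-free, every subsequential limit is either zero-free or identically zero (Hurwitz's theorem in several variables, applied via restriction to complex lines); since $g(0)=0$ the latter holds, so $g_{m,1}\to 0$ locally uniformly on $U^n$. Because $b^{m,k+1}\in P_{m,k}(c/2)$, its image under $\Phi_{b^{m,k}}$ lies in a fixed compact of $U^n$, whence $g_{m,k+1}(0)=g_{m,k}(\Phi_{b^{m,k}}(b^{m,k+1}))\to 0$ and the Hurwitz step repeats. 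After at most $N$ iterations we reach $g_{m,k_m}(0)=f(z^m)\to 0=L$. The main obstacle is that the $O$-bounds alone give only normality of $\{g_{m,k}\}_m$; without the omitted-value hypothesis the cluster functions are constrained at a single point and need not be constant. The omitted-value hypothesis, channelled through Hurwitz, is exactly what is needed to collapse each cluster function to the constant $L$.
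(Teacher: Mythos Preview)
Your argument is essentially the paper's, dualized on the Riemann sphere: the paper replaces $f$ by $(f-L)^{-1}$ to reduce to the case $L=\infty$ and then, in place of your Hurwitz step, invokes the standard dichotomy for normal families of holomorphic functions (a locally uniform limit is either holomorphic or identically $\infty$, so $G^m(0)\to\infty$ forces the limit to be $\infty$), propagating along the same chain of rescaled polydiscs from Proposition~\ref{prop}. One small wrinkle in your normalization: when $L=\infty$ the hypothesis ``$L$ omitted'' is vacuous for a holomorphic $f$, and $F=1/f$ is only meromorphic (poles at the zeros of $f$), so your reduction to the finite case is not quite clean as written; the paper's reduction in the opposite direction avoids this, since $(f-L)^{-1}$ is genuinely holomorphic whenever $L$ is a finite omitted value.
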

\begin{proof} By hypothesis of the theorem $L \not \in f(D)$ then $(f(z)-L)^{-1}$ is holomorphic on $D$ and has a radial limit at $\xi$ equal to $\infty.$
It is thus sufficient to consider the case
 $L=\infty.$

 By Theorem \ref{thm1} and hypothesis on $f$ we have $f(\pi(z))\to \infty $ as $z \to \xi,$ $z \in A_\alpha(\xi)\cap N_\xi^c(\partial D).$ Let $\{z^m\}$ be any sequence of points from $ A_\alpha(\xi)$ such that $z^m\to \xi$ as $m \to \infty.$
 Since the spherical derivative of $f$ in the normal and complex tangent directions
grows no faster than $K/d(z)$ and $K/\sqrt{d(z)},$ respectively, from (\ref{inst}) follows
$$d^2(z)|\bigtriangledown_1 F(z)|^2+d (z)|\bigtriangledown_{2,n} F(z)|^2 \leq O(1) \ \
\ \ (z \in A_\alpha(\xi)).$$
Using the notation introduced in the proof of Proposition \ref{prop}, the Bunyakovski\u{i}-Schwarz inequality and the fact that $d(b^{m,1})\approx d(z)$ for all $z\in P_{m,1}$ it follows that
$$\frac{(dG^m_{{p}}(v),dG^m_{{p}} (v))}{(1+|G^m({p})|^2)^2}\leq O(1)
\ \ \ (m=1,2, \ldots )$$
for all $p\in P$ and all $v\in \CC^n,$ $|v|=1.$

By Marty's criterion (see, e.g., \cite{[D1]}) the family $\{G^m\}$ are normal in $U^n.$
Since $G^m(\pi(z^m)=g_{m,1}(0)\to \infty$ as $m \to \infty$ it follows that the  sequence $\{G^m\}$
converges  uniformly on compact subsets of $U^n$ to $\infty.$   Then as in Theorem \ref{t1} we obtain $f(z^m)\to\infty$ as $m \to \infty.$

Since the
sequence of points $\{z^m\}$ chosen from $ A_\beta(0)$ is arbitrary, this completes the proof that the
function $f$ has the admissible limit $L$ at the point $\xi.$ The theorem is proved.

\end{proof}

\begin{theorem}\label{t4} Let $D$ be a  domain in $\CC^n,$ $ n>1,$ with $C^2$-smooth boundary. Let in every admissible domain with vertex $\xi$ the function $f$ is holomorphic and its  spherical derivative  in the normal and complex tangent directions
grows no faster than $K/d(z)$ and $K/\sqrt{d(z)},$ respectively. If
$$\lim _{A_\beta(\xi)\ni z\to \xi}f(z) =L \textrm{  for some } \beta >0,$$
then $f$ has an admissible limit at
 $\xi.$
\end{theorem}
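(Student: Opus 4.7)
The plan is to split the argument into two cases depending on whether $L=\infty$ or $L$ is finite. When $L=\infty$, the value $\infty$ is automatically an omitted value of the holomorphic function $f$, and the hypothesized convergence through $A_\beta(\xi)$ includes convergence along the inner normal $\nu_\xi$ (since a neighborhood of $\xi$ along $\nu_\xi$ sits in every $A_\alpha(\xi)$), so every hypothesis of Theorem \ref{t3} is satisfied and the conclusion follows at once.

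Assume now that $L$ is finite; after translation suppose $L=0$ and $\xi=0$. The growth bounds imply that $(1+|f|^2)^{-1}\nabla f$ is admissibly bounded at $\xi$, and the $A_\beta$-convergence gives a finite normal limit, so Proposition \ref{prop} yields that $f$ itself is admissibly bounded at $\xi$. Fix any $\alpha>0$ and a sequence $z^m\in A_\alpha(\xi)$ with $z^m\to\xi$. Let $w^m=(x^m_1,0,\ldots,0)$ lie on the real normal (so $w^m\in A_\beta(\xi)$ for $m$ large), and construct a chain of polydiscs $P(b^{m,k},c)$, $k=1,\ldots,k_m$, with $b^{m,1}=w^m$, $b^{m,k_m}=z^m$, and consecutive centers close enough that $|\Psi_{b^{m,k}}(b^{m,k+1})|\leq 1/4$ in sup norm; the chain length $k_m$ is bounded by $N(\alpha,c)$ uniformly in $m$ since the normal extent $|y^m_1|\leq\alpha x^m_1$ and the tangential extent $|z^m_\mu|\leq\sqrt{\alpha d(z^m)}$ ($\mu\geq 2$) are traversed in $O(\alpha/c)$ and $O(\sqrt{\alpha}/c)$ steps, using that $d(b^{m,k})\approx d(z^m)\approx x^m_1$ uniformly (Lemmas 5.2 and 7.2 of \cite{[WR]}, with all $b^{m,k}\in A_{2\alpha}(\xi)$). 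Define the holomorphic functions $g_{m,k}:=f\circ\Psi_{b^{m,k}}^{-1}$ on $U^n$; by admissible boundedness these form a uniformly bounded family.

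The propagation is by induction on $k$. For the base case, pick $c$ so small (depending on $\beta$ and $n$) that $P(w^m,c)\subset A_\beta(\xi)$ for $m$ large; then for each fixed $w\in U^n$, the point $\Psi_{w^m}^{-1}(w)$ lies in $A_\beta(\xi)$ and tends to $\xi$, so by hypothesis $g_{m,1}(w)\to 0$ pointwise on $U^n$, and Vitali's theorem upgrades this to uniform convergence on compact subsets of $U^n$. For the inductive step, the transition map $\phi_m:=\Psi_{b^{m,k}}\circ\Psi_{b^{m,k+1}}^{-1}$ is affine, $w\mapsto u_m+A_m w$, where $u_m=\Psi_{b^{m,k}}(b^{m,k+1})$ satisfies $|u_m|\leq 1/4$ and $A_m$ is a diagonal matrix whose entries are uniformly bounded in $m$ (by the comparability of $d(b^{m,k})$ and $d(b^{m,k+1})$); hence $\phi_m$ sends some fixed closed polydisc $K\subset U^n$ centered at $0$ into $\{|v|\leq 1/2\}$. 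On $K$ one has $g_{m,k+1}(w)=g_{m,k}(\phi_m(w))\to 0$ uniformly by the inductive hypothesis, and Vitali again gives $g_{m,k+1}\to 0$ uniformly on compacts of $U^n$. Iterating at most $k_m\leq N(\alpha,c)$ times, $f(z^m)=g_{m,k_m}(0)\to 0=L$, and since $\alpha$ and the sequence were arbitrary, $f$ has admissible limit $L$ at $\xi$. The main obstacle is choosing $c$ small enough that $P(w^m,c)$ actually lies in $A_\beta(\xi)$ and verifying the uniform comparability of $d(\cdot)$ along the chain, which is what keeps the compact $K$ for the inductive propagation fixed rather than shrinking with $m$.
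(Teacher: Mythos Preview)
Your argument is correct and shares the paper's backbone---the finite chain of anisotropic polydiscs linking a point on the normal to $z^m$, with convergence propagated step by step---but the mechanism you use for compactness of the families $\{g_{m,k}\}$ differs. You split cases: for $L=\infty$ you invoke Theorem~\ref{t3} (since $\infty$ is automatically an omitted value), and for finite $L$ you first apply Proposition~\ref{prop} to conclude that $f$ is admissibly bounded, so the $g_{m,k}$ form a uniformly bounded family and Vitali's theorem carries the propagation. The paper avoids the case split entirely: it observes that the assumed growth bounds on the spherical derivatives of $f$, together with the chain rule and the comparability $d(b^{m,k})\approx d(\cdot)$ on $P_{m,k}$, yield a uniform bound on the spherical derivatives of the $g_{m,k}$ on $U^n$, whence Marty's criterion gives normality directly---and this works uniformly for $L$ finite or infinite. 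Your route has the advantage of reusing Theorem~\ref{t3} and Proposition~\ref{prop} as black boxes; the paper's is more self-contained and handles all $L$ in one stroke. A further minor difference is that you anchor the chain at the real-normal point $(x^m_1,0,\ldots,0)$, which lets you place the first polydisc inside $A_\beta(\xi)$ by a direct size estimate, whereas the paper anchors at the complex-normal projection $(z^m_1,0,\ldots,0)$.
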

\begin{proof} Fix $\alpha > \beta.$ Let $\{z^m\}$ be an arbitrary sequence of $A_\alpha(\xi).$ Let $G^m=g_{m,1},$ $ m\geq 1 ,$
be the sequence of function defined as in proof of Proposition \ref{prop}. The family $\{g_{m,1}\}$ is normal on $P$ (this was proved in Theorem  \ref{teor2}). Since $f(z)\to L$ as $z\to 0$ in $ A_\beta(0),$ without lost a generality,
we may assume that $P_{m,1}(c) \subset A_\beta(0)$ for all $m=1, 2, \ldots \, .$ Hence $G^m$ tends to  $L$ uniformly on every compact subset of $P.$

By \cite[Lemma 5.2]{[WR]}
there exists $c_1=\min \{1/2, 1/2K\alpha \}<1/2$ so that if $z=(x_1,\zeta) \in A_{2\alpha}(0)$ is sufficiently small then
$x_1>d(z)\geq c_1x_1.$ Since $b^{m,1}_1=b^{m,2}_1=x^m_1$  we have
$$ c_1\leq\frac{d(b^{m,2})}{d(b^{m,1})}\leq \frac{1}{c_1}.$$

Since
$$\Psi^{-1}_{b^{m,2}}(w)=(cd(b^{m,2})w+b^{m,2}_1, c\sqrt{d(b^{m,2})}w+b^{m,2}_2, \ldots, c\sqrt{d(b^{m,2})}w+b^{m,2}_n),$$
$|b^{m,2}_1-b^{m,1}_1|<c/2\cdot d(b^{m,1}),$ and $|b^{m,2}_\mu-b^{m,1}_\mu|<c/2\sqrt{d(b^{m,1})},$ $\mu=1,2,\ldots, n,$ the little calculation shows that for for all $w \in P(0,c_1/4)\subset P$
$$|w_1cd(b^{m,2})-b^{m,2}_1|<\frac{cc_1}{4}\frac{d(b^{m,2})}{d(b^{m,1})}d(b^{m,1})+\frac{c}{2}d(b^{m,1})
<\frac{3c}{4}d(b^{m,1})$$ and
$$|w_\mu cd(b^{m,2})-b^{m,2}_\mu|<(\frac{cc_1}{4\sqrt{c_1}}+\frac{c}{2})\sqrt{d(b^{m,1})}
<\frac{3c}{4}d(b^{m,1})\ \ \ \mu=1,2,\ldots, n,$$
 It follows $g_{m,2}$ takes the same values on $ P(0,c_1/4)$ as $f$ on $\Psi^{-1}_{b^{m,2}}(P(0,c_1/4))\subset P_{m,1}(c)$ hence $g_{m,2}\to L$  on $ \overline{P(0,c_1/5)}\subset P.$

The family $\{g_{m,2}\}$ is normal on $P$ (this was proved in Theorem  \ref{teor2}) hence
the family $\{g_{m,2}\}$ also tends to  $L$ uniformly on compact subsets of $P.$ After finite steps we obtain that $f(z^m)\to L$ as $m \to \infty.$ Since the
sequence of points $\{z^m\}$ chosen from $ A_\beta(0)$ is arbitrary, this completes the proof that the
function $f$ has the admissible limit $L$ at the point $\xi.$ The theorem is proved.
\end{proof}

For bounded holomorphic functions this theorem appears in Chirka's paper \cite{[C]}, with the proof sketched there relying on certain estimates on harmonic measures. A proof based on a different method was given by Ramey \cite[Theorem 2]{[WR]}.

% ----------------------------------------------------------------

\end{document}